\theoremstyle{plain}
\newtheorem{theorem}{Theorem}
\newtheorem{definition}{Definition}
\newtheorem{lemma}{Lemma}
\newtheorem{proposition}{Proposition}
\newtheorem{remark}{Remark}
\begin{document}
\title{A modified proximal contraction principle with applications to variational
inequality problems}
\author{Aftab Alam}

\maketitle
\begin{center}
{\footnotesize \footnotesize Formerly at: Department of Mathematics, Aligarh Muslim University, Aligarh-202002, India.\\
Email Id:  aafu.amu@gmail.com\\
Orcid Id: 0000-0002-0479-9118\\
*Corresponding author: aafu.amu@gmail.com}
\end{center}

{\footnotesize{\noindent {\bf Abstract.}}  In this paper, we introduce the notions of proximally  completeness, proximally  closedness and proximally continuity and utilize the same  to prove a result on existence and uniqueness of best proximity points in the setting of metric space (not necessarily complete). Our newly proved result enriches, sharpens, improves and modifies the proximal contraction principle of  Basha [J. Optim. Theory Appl. 2011:151 (2011), 210-216]. In order to illustrate the effectiveness of our finding, we discuss the sufficient conditions ensuring the existence of a unique solution
of certain variational inequality problem.\\

\vskip 0.1 mm\noindent {\bf Keywords}: best approximation; best proximity points; proximal  contractions; variational inequality.\\

\noindent {\bf AMS Subject Classification}: 41A65, 47H09, 30L99, 47J20.}

\section{Introduction}
Consider the non-self-mapping $T:A\rightarrow B$, whereas $A$ and $B$ stand for two nonempty subset a metric space $(X,d)$.  We say that an element $x\in A$ is a fixed point of the mapping if $T(x)=x$ provided $A\cap T(A)\neq\emptyset$. On the other hand, in case of $A\cap T(A)=\emptyset$, $T$ has no fixed point so that for each $x\in A$, $d(x,Tx)>0$. In such a case, we are interesting to compute an optimal approximate solution of functional equation $T(x)=x$ such that $d(x,Tx)$ is closet to zero. In this perspective, best approximation theory and best proximity point analysis have been appeared. \\

In $1969$,  Fan \cite{BAP-1} proved the first best approximation theorem in the setting of Hausdorff locally convex topological vector space.  In the subsequent years, the classical best approximation theorem of  Fan has been generalized and extended by many researchers, but we merely refer \cite{BAP-2,BAP-3,BAP-4,BAP-5} and references therein. A comprehensive and unified approach to such best approximation theorems has been furnished by Vetrivel et al. \cite{BAP-6}. On the other hand, Eldred and Veeramani \cite{BPP-0} established some results on existence of best proximity points of cyclic contractions in the context of metric space and utilized the same to discuss the existence, uniqueness and convergence of  best proximity points in the framework of uniformly convex Banach spaces. In the same continuation,  Basha \cite{SSB-1,SSB-2,SSB-3} extended the Banach contraction principle employing the minimum distance between two subsets of the metric space and utilized the same to prove the best proximity point results for non-self proximal contractions.\\

Indeed, the best proximity point theorems offer an approximate solution that is optimal. However, the best approximation theorems yield approximate solutions that are not necessarily optimal. It is interesting to see that best proximity point theorems generalize fixed point theorems in a natural way. In fact, when the mapping under consideration is a self-mapping, a best proximity point boils down to a fixed point. The theory of best proximity point has the great importance in nonlinear analysis, approximation theory, optimization theory, game theory, fixed point theory and variational inequalities.\\

The aim of this paper is to refine and to improve the proximal contraction principle due to Basha \cite{SSB-3}. In process, we introduce some new notions, such as: $T$-proximal sequence, proximally  completeness, proximally  closedness and proximally continuity. As an application of our result, we study the existence of a unique solution of a variational inequality problem.\\

\section{Preliminaries}

Given a pair $(A,B)$ of nonempty subsets of a metric space $(X,d)$, the following notations will be utilized in our subsequential discussion:
\begin{enumerate}
\item[] $d(A,B):=\inf\{d(x,y):x\in A,y\in B\}$,
\item[] $d(x,B):=\inf\{d(x,y):y\in B\}$, for $x\in A$,
\item[] $A_0:=\{x\in A:d(x,y)=d(A,B),~~\mbox{for some}~y\in B\}$,
\item[] $B_0:=\{y\in A:d(x,y)=d(A,B),~~\mbox{for some}~x\in A\}$.
\end{enumerate}

It can be noted that for each $x\in A_0$, $\exists$ $y\in B_0$ such that $d(x,y)=d(A,B)$ and
conversely, for each $y\in B_0$, $\exists$ $x\in A_0$ such that $d(x,y)=d(A,B)$. Consequently, $A_0$ is nonempty if and only if $B_0$ is nonempty. Also, it is evident that both  $A_0$ and  $B_0$ are nonempty, whenever $A$ intersects $B$.

\begin{definition} \cite{BPP-0} Let $(X,d)$ be a metric space and $(A,B)$ a pair of two nonempty subsets of $X$. An element $\overline{x}\in A$ is called a best proximity point of $T:A\rightarrow B$ if
$$d(\overline{x},T\overline{x})=d(A,B):=\inf\{d(x,y):x\in A,y\in B\}.$$
In other words, we say that $\overline{x}\in A$ is a best proximity point of $T$ if at $\overline{x}$ the function $d(x,Tx)$ attains its global minimum with the value $d(A,B)$.
\end{definition}

\begin{definition}\cite{SSB-3} Let $(X,d)$ be a metric space and $(A,B)$ a pair of nonempty subsets of $X$. A mapping $T:A\rightarrow B$  is called proximal contraction if $\exists$ $k\in [0,1)$ such that for all $x,y,u,v\in A$,
$$d(u,Tx)=d(v,Ty)=d(A,B)\Rightarrow d(u,v)\leq k d(x,y).$$
\end{definition}
\begin{remark} Any proximal contraction mapping is not necessarily continuous. Under the restriction $A=B=X$, the notion of proximal contraction coincides with that of usual contraction.
\end{remark}

\begin{definition}\cite{BPP-0}
Let $(X,d)$ be a metric space and $(A,B)$ a pair of nonempty subsets of $X$. The set $B$ is called approximatively compact with respect to $A$ if every sequence $\{y_n\}\subset B$ satisfying the condition that $d(x,y_n)\stackrel{\mathbb{R}}\to d(x,B)$ for some $x\in A$,
has a convergent subsequence.
\end{definition}

\begin{remark}\cite{SSB-3} Every set is approximatively compact with respect to itself. Also, every compact set is approximatively compact.
\end{remark}

\begin{remark}\cite{BPP-0} The sets $A_0$ and $B_0$ are nonempty if $A$ is compact and $B$ is approximatively compact with respect to $A$.
\end{remark}

\begin{theorem}\label{thm-00}\cite{SSB-3}
Let $(X,d)$ be a complete metric space, $(A,B)$ a pair of nonempty subsets of $X$ and $T:A\rightarrow B$ a mapping. Suppose that the following conditions hold:
\begin{enumerate}
\item[{\rm(i)}] $A_0\neq \emptyset$ and $B_0\neq \emptyset$,
\item[{\rm(ii)}] $T(A_0)\subseteq B_0$,
\item[{\rm(iii)}] $T$ is  proximal contraction,
\item[{\rm(iv)}] $A$ and $B$ are closed subspaces of $X$,
\item[{\rm(v)}] $B$ is approximatively compact with respect to $A$.
\end{enumerate}
Then $T$ has a unique best proximity point. Further, for any fixed element $x_0\in A_0$, the sequence $\{x_n\}$, defined by
$$d(x_{n+1},Tx_n)=d(A,B)$$
converges to the unique best proximity point of $T$.
\end{theorem}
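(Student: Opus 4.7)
The plan is to mimic the Banach fixed point argument in the proximal setting, constructing a Picard-like sequence and then using approximative compactness to upgrade its convergence into a best proximity property.

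First, I would build the iterative sequence. Starting from the given $x_0\in A_0$, since $Tx_0\in T(A_0)\subseteq B_0$, the definition of $B_0$ furnishes some $x_1\in A_0$ with $d(x_1,Tx_0)=d(A,B)$. Iterating, hypothesis (ii) lets me inductively produce a sequence $\{x_n\}\subset A_0$ satisfying $d(x_{n+1},Tx_n)=d(A,B)$ for every $n\geq 0$. Applying the proximal contraction condition to the pair of equalities $d(x_{n+1},Tx_n)=d(x_n,Tx_{n-1})=d(A,B)$ yields $d(x_{n+1},x_n)\leq k\,d(x_n,x_{n-1})$. A routine telescoping and geometric-series estimate then shows $\{x_n\}$ is Cauchy, so by completeness of $X$ and closedness of $A$ it converges to some $\overline{x}\in A$.

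Next I would show $\overline{x}$ is a best proximity point. From $d(x_{n+1},Tx_n)=d(A,B)$ and the triangle inequality, $d(A,B)\leq d(\overline{x},Tx_n)\leq d(\overline{x},x_{n+1})+d(A,B)$, so $d(\overline{x},Tx_n)\to d(A,B)=d(\overline{x},B)$. Approximative compactness of $B$ with respect to $A$ then extracts a subsequence $\{Tx_{n_k}\}$ converging to some $y^*\in B$ (with $B$ closed), and passing to the limit in $d(\overline{x},Tx_{n_k})\to d(A,B)$ gives $d(\overline{x},y^*)=d(A,B)$. Hence $\overline{x}\in A_0$, so $T\overline{x}\in B_0$ and there exists $u\in A_0$ with $d(u,T\overline{x})=d(A,B)$. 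Combining this with $d(x_{n_k+1},Tx_{n_k})=d(A,B)$ is not directly usable (the second argument is $Tx_{n_k}$, not $T\overline{x}$), so instead I pair it with $d(x_{n+1},Tx_n)=d(A,B)$ after noting $T\overline{x}$ needs to be reached via another route. The cleaner way: use the proximal contraction on the pair $d(u,T\overline{x})=d(x_{n+1},Tx_n)=d(A,B)$ to get $d(u,x_{n+1})\leq k\,d(\overline{x},x_n)\to 0$, forcing $u=\overline{x}$ and thus $d(\overline{x},T\overline{x})=d(A,B)$.

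Finally, uniqueness follows immediately: if $\overline{x}$ and $\overline{z}$ are two best proximity points, then $d(\overline{x},T\overline{x})=d(\overline{z},T\overline{z})=d(A,B)$ and the proximal contraction condition give $d(\overline{x},\overline{z})\leq k\,d(\overline{x},\overline{z})$ with $k<1$, forcing $\overline{x}=\overline{z}$.

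I expect the main technical obstacle to be the step identifying $\overline{x}$ as an element of $A_0$. Cauchy convergence only places $\overline{x}$ in the closed set $A$, not a priori in $A_0$, and the proximal contraction hypothesis cannot be invoked until one has a point $u\in A_0$ with $d(u,T\overline{x})=d(A,B)$. This forces simultaneous use of approximative compactness (to produce $y^*$ witnessing $\overline{x}\in A_0$) and hypothesis (ii) (to then produce $u$); the remaining manipulations are essentially bookkeeping mirroring Banach's classical argument.
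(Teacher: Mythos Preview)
Your proposal is correct and follows essentially the same route the paper takes. Note that the paper does not give a direct proof of this theorem (it is quoted from \cite{SSB-3}); however, the paper's Proposition~1 together with the ``$A_0$ proximally closed'' branch of the proof of Theorem~\ref{main-thm} specialize exactly to your argument: build the proximal Picard sequence, show it is Cauchy and converges in $A$, use approximative compactness of $B$ to place the limit $\overline{x}$ in $A_0$, pick $u\in A_0$ with $d(u,T\overline{x})=d(A,B)$, and force $u=\overline{x}$ via the proximal contraction inequality $d(u,x_{n+1})\le k\,d(\overline{x},x_n)$, with uniqueness handled identically.
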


\section{Main Results}
In this section, we first introduce several metrical notions in our setting and then establish the result on existence and uniqueness of best proximity points.\\

\begin{definition} Let $(X,d)$ be a metric space, $(A,B)$ a pair of nonempty subsets of $X$ and $T:A\rightarrow B$ a mapping. A sequence $\{x_n\}\subset A$ is called $T$-proximal if
$$d(x_{n+1},Tx_n)=d(A,B).$$
\end{definition}

\begin{remark} In particular for $A=B=X$,  the notion of
 $T$-proximal sequence coincides with that of sequence of Picard iteration.
\end{remark}

\begin{definition} Let $(X,d)$ be a metric space, $(A,B)$ a pair of nonempty subsets of $X$ and $T:A\rightarrow B$ a mapping. We say that the subspace $(A,d)$ is proximally  complete if every $T$-proximal Cauchy sequence in $A$ converges in $A$.
\end{definition}
Clearly, every complete subspace of a metric space is proximally complete.\\

\begin{definition} Let $(X,d)$ be a metric space, $(A,B)$ a pair of nonempty subsets of $X$, $T:A\rightarrow B$ a mapping and $E\subseteq A$. We say that $E$ is proximally closed subspace of $A$ if the limit of each $T$-proximal convergent sequence in $E$ belongs to $E$.
\end{definition}
Clearly, every closed subspace of a metric space is proximally closed.

\begin{proposition} Let $(X,d)$ be a metric space, $(A,B)$ a pair of nonempty subsets of $X$ and $T:A\rightarrow B$ a mapping. If $B$ is approximatively compact with respect to $A$, then $A_0$ is proximally closed.
\end{proposition}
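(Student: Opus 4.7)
The plan is to take a $T$-proximal sequence $\{x_n\} \subset A_0$ converging to some $x$, and to produce an element $y \in B$ satisfying $d(x,y) = d(A,B)$, which will place $x$ in $A_0$. The natural candidates for $y$ come from the sequence itself: since $d(x_{n+1}, Tx_n) = d(A,B)$, the points $y_n := Tx_{n-1} \in B$ realize the minimum distance with $x_n$, so $d(x_n, y_n) = d(A,B)$ for every $n \geq 1$.

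The first technical step is to check that $d(x, y_n) \to d(A,B)$. By the triangle inequality,
\[
d(x, y_n) \leq d(x, x_n) + d(x_n, y_n) = d(x, x_n) + d(A,B),
\]
so $\limsup_n d(x, y_n) \leq d(A,B)$. Since $x \in A$ and $y_n \in B$, we also have $d(x, y_n) \geq d(x, B) \geq d(A,B)$, which in particular forces $d(x, B) = d(A, B)$ and $d(x, y_n) \to d(x, B)$.

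Next, I would invoke the approximate compactness of $B$ with respect to $A$: since $\{y_n\} \subset B$ and $d(x, y_n) \to d(x, B)$ with $x \in A$, some subsequence $\{y_{n_k}\}$ converges to a point $y \in B$. Passing to the limit in the continuous map $z \mapsto d(x, z)$ gives $d(x, y) = \lim_k d(x, y_{n_k}) = d(A,B)$, so $x \in A_0$, as required.

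Overall the argument is short and each step is essentially a triangle-inequality computation; the only non-routine point is the choice of the auxiliary sequence $\{y_n\} = \{Tx_{n-1}\}$, which is what couples the $T$-proximal structure of $\{x_n\}$ to the approximate compactness hypothesis. The one delicate place to watch is that approximate compactness must yield a limit lying in $B$ (rather than merely in $\overline{B}$); this is built into the definition quoted from \cite{BPP-0}, and without it the conclusion $y \in B$, and hence $x \in A_0$, would fail.
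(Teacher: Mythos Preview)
Your proof is correct and follows essentially the same route as the paper's: both use the $T$-proximal relation to squeeze $d(x,Tx_{n-1})$ (equivalently, your $d(x,y_n)$) between $d(x,B)$ and $d(x,x_n)+d(A,B)$, then invoke approximative compactness to extract a limit $y\in B$ with $d(x,y)=d(A,B)$. Your version is slightly more explicit in deducing $d(x,B)=d(A,B)$ and in flagging that the limit must lie in $B$; apart from a harmless index shift, the arguments coincide.
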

\begin{proof} Let $\{x_n\}\subset A_0$ be a $T$-proximal sequence converging to $x\in A$. We have to show that $x\in A_0$. As $\{x_n\}$ is  $T$-proximal, we have
$$d(x_{n+1},Tx_n)=d(A,B).$$
Further since $T(x_n)\in B$, therefore we have
\begin{eqnarray*}
d(x,B)&\leq& d(x,Tx_n)\\
&\leq&d(x,x_{n+1})+d(x_{n+1},Tx_n)\\
&=&d(x,x_{n+1})+d(A,B)\\
&\leq&d(x,x_{n+1})+d(x,B)
\end{eqnarray*}
so that
\begin{equation}\label{eqn-00}
d(x,B)\leq d(x,Tx_n)\leq d(x,x_{n+1})+d(x,B).
\end{equation}
Letting $n\to\infty$, inequality \eqref{eqn-00} gives rise to
$$d(x,Tx_n)\stackrel{\mathbb{R}}\to d(x,B).$$
By approximatively compactness of $B$, $\{Tx_n\}$ has a subsequence $\{Tx_{n_k}\}$ converging to $y\in B$. Thus, we have
$$d(x,y)=d(A,B)$$
yielding thereby $x\in A_0$. Hence, $A_0$ is proximally closed.
\end{proof}

\begin{definition} Let $(X,d)$ be a metric space and $(A,B)$ a pair of nonempty subsets of $X$. A mapping $T:A\rightarrow B$ is called proximally continuous at a point $x\in A$
if for any $T$-proximal sequence $\{x_n\}\subset A$ such that
$x_n\stackrel{d}{\longrightarrow} x$, we have
$T(x_n)\stackrel{d}{\longrightarrow} T(x)$. $T$ is called proximally
continuous if it is proximally continuous at each
point of $X$.
\end{definition}
Clearly, every continuous function is proximally continuous.\\

Now, we present a sharpened version of  Theorem \ref{thm-00}.  Our result improves Theorem \ref{thm-00} in
the following respects:
\begin{enumerate}
\item[{$\bullet$}] ``Completeness of whole metric space $X$" is replaced by ``proximally completeness of subspace $A$". Moreover, closedness of $A$ and $B$ can be relaxed.
\item[{$\bullet$}] ``Approximatively compactness of $B$ (w.r.t. $A$)" is replaced by relatively weaker
notion, namely, ``proximally closedness of $A_0$". Moreover, this condition is not necessary as it can
alternately be replaced by the ``proximally continuity of $T$".
\end{enumerate}

\begin{theorem}\label{main-thm}
Let $(X,d)$ be a metric space, $(A,B)$ a pair of nonempty subsets of $X$  and $T:A\rightarrow B$ a mapping. Suppose that the following conditions hold:
\begin{enumerate}
\item[{\rm(i)}] $A_0\neq \emptyset$
\item[{\rm(ii)}] $T(A_0)\subseteq B_0$,
\item[{\rm(iii)}] $T$ is  proximal contraction,
\item[{\rm(iv)}] $A$ is proximally complete subspace of $X$,
\item[{\rm(v)}] either $T$ is  proximally continuous or $A_0$ is proximally closed subspace of $A$.
\end{enumerate}
Then $T$ has a unique best proximity point. Further, for any fixed element $x_0\in A_0$, the $T$-proximal sequence $\{x_n\}$ based on initial point $x_0$ converges to the unique best proximity point of $T$.
\end{theorem}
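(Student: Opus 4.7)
The plan is to mimic the classical Picard-iteration proof of the Banach contraction principle, but using $T$-proximal sequences in place of orbits and the proximal-contraction inequality in place of the ordinary one.

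\textbf{Step 1: Construct the iterative sequence.} Starting from the fixed $x_0\in A_0$, I will build a $T$-proximal sequence $\{x_n\}\subseteq A_0$ recursively. Since $T(A_0)\subseteq B_0$, the element $Tx_0$ lies in $B_0$, so by the very definition of $B_0$ there exists $x_1\in A_0$ with $d(x_1,Tx_0)=d(A,B)$. Iterating, I obtain $\{x_n\}\subseteq A_0$ with $d(x_{n+1},Tx_n)=d(A,B)$ for every $n$.

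\textbf{Step 2: Show $\{x_n\}$ is Cauchy.} Applying the proximal contraction condition to the two equalities $d(x_{n+1},Tx_n)=d(x_n,Tx_{n-1})=d(A,B)$ yields $d(x_{n+1},x_n)\leq k\,d(x_n,x_{n-1})$ for every $n\geq 1$. A routine geometric-series/triangle-inequality argument then gives the Cauchy property. By proximal completeness of $A$ (hypothesis (iv)), there is an $x\in A$ with $x_n\to x$.

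\textbf{Step 3: Identify $x$ as a best proximity point using (v).} Here the dichotomy in hypothesis (v) splits the argument. If $T$ is proximally continuous, then $Tx_n\to Tx$, and passing to the limit in $d(x_{n+1},Tx_n)=d(A,B)$ via the triangle inequality immediately gives $d(x,Tx)=d(A,B)$. If instead $A_0$ is proximally closed, then $x\in A_0$, so by (ii) $Tx\in B_0$ and there exists $x'\in A_0$ with $d(x',Tx)=d(A,B)$; applying the proximal contraction to $d(x',Tx)=d(x_{n+1},Tx_n)=d(A,B)$ gives $d(x',x_{n+1})\leq k\,d(x,x_n)\to 0$, forcing $x'=x$ and hence $d(x,Tx)=d(A,B)$. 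I expect this second sub-case to be the trickiest point, because we do not have any direct continuity of $T$ and must instead manufacture the candidate $x'$ from (ii) and then collapse it onto $x$ through the contraction inequality.

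\textbf{Step 4: Uniqueness.} If $\overline{x}$ and $\overline{y}$ are both best proximity points, then $d(\overline{x},T\overline{x})=d(\overline{y},T\overline{y})=d(A,B)$, and the proximal contraction inequality (applied with $u=x=\overline{x}$, $v=y=\overline{y}$) gives $d(\overline{x},\overline{y})\leq k\,d(\overline{x},\overline{y})$, which, since $k<1$, forces $\overline{x}=\overline{y}$. Together with Step 3 this also confirms that the $T$-proximal sequence constructed in Step 1 converges to this unique point.
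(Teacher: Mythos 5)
Your proposal is correct and follows essentially the same route as the paper's own proof: the same recursive construction of the $T$-proximal sequence from hypotheses (i)--(ii), the same geometric-series Cauchy argument combined with proximal completeness, the same two-case treatment of hypothesis (v) (including the key device in the proximally-closed case of producing an auxiliary point $x'\in A_0$ with $d(x',Tx)=d(A,B)$ and collapsing it onto the limit via the contraction inequality and uniqueness of limits), and the same uniqueness argument. No gaps; nothing further is needed.
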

\begin{proof} In view of assumption (i), $\exists$ $x_0\in A_0$. As $T(x_0)\in T(A_0)\subseteq B_0$, $\exists$ $x_1\in A_0$ such that
\begin{equation*}
d(x_1,Tx_0)=d(A,B).
\end{equation*}
As $T(x_1)\in T(A_0)\subseteq B_0$, $\exists$ $x_2\in A_0$ such that
\begin{equation*}
d(x_2,Tx_1)=d(A,B).
\end{equation*}
Continuing this process, by induction, we can construct a sequence $\{x_n\}\subset A_0$ such that
\begin{equation}\label{eqn-1}
d(x_{n+1},Tx_n)=d(A,B).
\end{equation}
Hence, $\{x_n\}$ is a $T$-proximal sequence.
Applying the contractivity condition (iii) to \eqref{eqn-1}, we deduce, for all $n\in \mathbb{N}
 _0$ and for some $k\in [0,1)$, that
$$d(x_{n+1},x_{n+2})\leq k d(x_{n},x_{n+1}),$$
which by induction yields that
\begin{equation}\label{eqn-2}
d(x_{n+1},x_{n+2})\leq k^{n+1} d(x_0,x_1).\end{equation}
For all $m,n\in\mathbb{N}$ with $m<n$, using \eqref{eqn-2} and triangular inequality, we get
 \begin{eqnarray*}
 \nonumber d(x_m,x_n)&\leq& d(x_{m},x_{m+1})+d(x_{m+1},x_{m+2})+\cdots+d(x_{n-1},x_{n})\\
 &\leq&(k^{m}+k^{m+1}+\cdots+k^{n-1})d(x_0,x_1)\\
 &=&k^{m}(1+k+k^{2}+\cdots+k^{n-m-1})d(x_0,x_1)\\
 &\leq& \frac{k^m}{1-k}d(x_0,x_1),~~({\rm where}~ 0\leq k<1)\\
 &\rightarrow& 0\;{\rm as}\;m\;({\rm and~hence}~n)\rightarrow \infty,
\end{eqnarray*}
which implies that the sequence $\{x_n\}$ is Cauchy. Hence, $\{x_n\}$ is a $T$-proximal Cauchy sequence in $A$. By proximally
completeness of $A$, $\exists~\overline{x}\in A$ such that
$x_n\stackrel{d}{\longrightarrow} \overline{x}$.\\

Now, we use assumption (v) to show that $x$ is a best proximity
point of $T$. Suppose that $T$ is proximally  continuous. As
$\{x_n\}$ is a $T$-proximal sequence satisfying
$x_n\stackrel{d}{\longrightarrow} \overline{x}$, proximally continuity of
$T$ implies that $T(x_n)\stackrel{d}{\longrightarrow} T(\overline{x})$.
Using continuity of $d$ and \eqref{eqn-1}, we get
$$d(\overline{x},T\overline{x})=d(\lim\limits_{n\rightarrow\infty} x_{n+1},\lim\limits_{n\rightarrow\infty} Tx_n)=\lim\limits_{n\rightarrow\infty} d(x_{n+1},Tx_n)=d(A,B)$$
so that $\overline{x}$ is a best proximity point of $T$. Alternately, assume that $A_0$ is proximally closed subspace of $A$. As
$\{x_n\}\subset A_0$ is a $T$-proximal sequence satisfying
$x_n\stackrel{d}{\longrightarrow} \overline{x}\in A$, we have $\overline{x}\in A_0$. Also by assumption (ii), we get $T(\overline{x})\in B_0$. Therefore, $\exists~\omega\in A_0$ such that
\begin{equation}\label{eqn-5}
d(\omega,T\overline{x})=d(A,B).
\end{equation}
Combining  \eqref{eqn-1} and \eqref{eqn-5}, we get
\begin{equation}\label{eqn-6}
d(x_{n+1},Tx_{n})=d(\omega,T\overline{x})=d(A,B).
\end{equation}
Using assumption (iii) and
$x_{n}\stackrel{d}{\longrightarrow} x$, we obtain for some $0\leq k<1$ that
\begin{eqnarray*}
 \nonumber d(x_{n+1},\omega)\leq k d(x_{n},\overline{x})\rightarrow 0\;{\rm as}\; n\rightarrow \infty
\end{eqnarray*}
so that $x_{n}\stackrel{d}{\longrightarrow}\omega$. Owing to the uniqueness of limit, we obtain $\omega=\overline{x}$. Hence, (\ref{eqn-5}) reduces to
$$d(\overline{x},T\overline{x})=d(A,B),$$
so that $\overline{x}$  is a best proximity point of $T$.\\

Finally, we prove the uniqueness of  best proximity point. Suppose that  $\overline{x}$ and $\overline{y}$  are two best proximity points of $T$. Then, we have
\begin{equation}\label{eqn-7}
d(\overline{x},T\overline{x})=d(\overline{y},T\overline{y})=d(A,B).
\end{equation}
Applying proximal contractivity condition (iii) to \eqref{eqn-7}, we get
$$d(\overline{x},\overline{y})\leq k d(\overline{x},\overline{y}),$$
for some $0\leq k<1$, yielding thereby $\overline{x}=\overline{y}$. Hence, $T$ has a unique best proximity point.
\end{proof}

\begin{remark} Under the restriction $A=B=X$, Theorem \ref{main-thm} reduces to classical Banach contraction principle.
\end{remark}

\section{An Application to Variational Inequality Problem}
Let $H$ we a real Hilbert space with inner product $\langle\cdot,\cdot\rangle$ and induced norm $\|\cdot\|$. Let $K$ be a nonempty closed and convex subset of $H$ and $S:H\to H$ an operator. Consider the following variational inequality problem:
\begin{equation}\label{vi0}
\mbox{Find~} u\in K \mbox{such~that~} \langle Su,v-u\rangle\geq 0,\quad\forall~v\in K.
\end{equation}
To solve problem \eqref{vi0}, we use the theoretical results concerning the metric projection operator $P_K: H\to K$. Notice that that for each $u\in H$, there exists a unique nearest point $P_K(u)$ such that
$$\|u-P_Ku\|\leq \|u-v\|,\quad\forall~v\in K.$$

The theory of variational inequalities is motivated by equilibrium problems. During last three decades, the theory of  variational inequalities emerged as a rapidly growing area of research due to its appearance in the fields of nonlinear analysis, operations research, economics, game theory, mathematical physics and calculus of variations associated with the minimization of
infinite-dimensional functionals. For details see \cite{VI-1,VI-2,VI-3} and the references therein. In the recent past, many authors solved the variational inequality problems employing fixed point theorems. The following known results correlate the solvability of a variational inequality problem to the solvability of certain fixed point problem.

\begin{proposition}
\cite{VI-3} Let $z\in H$. Then $u\in K$ satisfies the inequality $\langle u-z,y-u \rangle\geq 0$,
for all $y\in K$ if and only if $u=P_K(z)$.
\end{proposition}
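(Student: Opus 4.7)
The plan is to prove both directions via the standard Hilbert-space projection argument, exploiting convexity of $K$ for one direction and a direct $L^2$-expansion for the other.

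For the forward direction, I would assume $u=P_K(z)$, so that $\|z-u\|\leq\|z-w\|$ for every $w\in K$. Given an arbitrary $y\in K$ and $t\in(0,1]$, convexity of $K$ gives $w_t:=u+t(y-u)=(1-t)u+ty\in K$. Plugging $w_t$ into the minimization property and expanding the norm yields
\[
\|z-u\|^{2}\leq\|z-u-t(y-u)\|^{2}=\|z-u\|^{2}-2t\langle z-u,y-u\rangle+t^{2}\|y-u\|^{2}.
\]
Cancelling $\|z-u\|^2$, dividing by $2t>0$, and letting $t\to 0^{+}$ gives $\langle z-u,y-u\rangle\leq 0$, equivalently $\langle u-z,y-u\rangle\geq 0$. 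Since $y\in K$ was arbitrary, the desired variational inequality holds.

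For the reverse direction, assume $\langle u-z,y-u\rangle\geq 0$ for every $y\in K$. For any $y\in K$, expand
\[
\|z-y\|^{2}=\|(z-u)+(u-y)\|^{2}=\|z-u\|^{2}+2\langle z-u,u-y\rangle+\|u-y\|^{2}.
\]
Now $2\langle z-u,u-y\rangle=2\langle u-z,y-u\rangle\geq 0$ by hypothesis, so $\|z-y\|^{2}\geq\|z-u\|^{2}+\|u-y\|^{2}\geq\|z-u\|^{2}$. Hence $u$ minimizes $\|z-\cdot\|$ over $K$, and by uniqueness of the nearest point $u=P_K(z)$.

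There is no real obstacle here; this is a classical characterization and the entire argument is two short Hilbert-space expansions. The only subtlety worth highlighting is that the forward direction genuinely uses convexity of $K$ (to form $w_t$ and push $t\to 0^{+}$), whereas the reverse direction uses only the inner-product identity and the uniqueness of the projection. Since the result is quoted from \cite{VI-3}, a very compact write-up should suffice.
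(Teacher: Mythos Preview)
Your argument is correct in both directions and is exactly the classical characterization one finds in \cite{VI-3}. Note, however, that the paper itself does not supply a proof of this proposition: it is simply quoted from \cite{VI-3} as a known result, with no accompanying \texttt{proof} environment. So there is nothing in the paper to compare your argument against; your write-up would serve as a self-contained justification where the paper opted for a citation.
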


\begin{lemma}\cite{VI-3}\label{lem} Let $S:H\to H$ an operator. Then $u\in K$ solves the inequality $\langle Su,v-u \rangle\geq 0$, for all $v\in K$ if and only if $u=P_K(u-\lambda Su)$, where $\lambda>0$.
\end{lemma}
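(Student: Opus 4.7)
The plan is to derive the lemma as a direct corollary of the preceding proposition by making a judicious choice of the auxiliary point $z$. The key observation is that the variational inequality in Lemma~\ref{lem} has $Su$ in the first slot of the inner product, whereas the proposition has $u-z$; this suggests setting $z$ so that $u-z = \lambda Su$, i.e., $z := u-\lambda Su$.

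First, I would fix $\lambda>0$, let $u\in K$, and apply the preceding proposition with the specific choice $z:=u-\lambda Su\in H$ (which lies in $H$ since $S:H\to H$ and $K\subseteq H$). The proposition asserts that
$$u=P_K(u-\lambda Su)\iff \langle u-(u-\lambda Su),\,y-u\rangle\geq 0,\quad\forall\,y\in K.$$

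Second, I would simplify the left entry of the inner product: $u-(u-\lambda Su)=\lambda Su$, so the right-hand condition becomes
$$\langle \lambda Su,\,y-u\rangle \geq 0,\quad\forall\,y\in K.$$
By bilinearity of $\langle\cdot,\cdot\rangle$ and the fact that $\lambda>0$, this inequality is equivalent (multiplying/dividing by the positive scalar $\lambda$) to
$$\langle Su,\,y-u\rangle\geq 0,\quad\forall\,y\in K,$$
which, upon renaming the dummy variable $y$ as $v$, is exactly the variational inequality whose solvability characterizes $u$ in the lemma. Chaining the two equivalences yields the desired biconditional.

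There is essentially no obstacle here; the result is an immediate reformulation via the projection characterization in the previous proposition. The only subtlety is the legitimacy of invoking the proposition at the point $z=u-\lambda Su$, and this is automatic because $H$ is a vector space and $u, Su\in H$. No contraction, completeness, or continuity hypothesis from the main results of the paper is needed for this lemma.
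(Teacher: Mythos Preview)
Your proof is correct and is the standard derivation of this classical result. Note, however, that the paper does not supply its own proof of this lemma: it is simply quoted from reference \cite{VI-3}, so there is nothing in the paper to compare your argument against beyond the preceding proposition, which you have used exactly as intended.
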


The main result of this section runs as follows:

\begin{theorem} 
Let $K$ be a nonempty closed and convex subset of a real Hilbert space $H$. If $S:H\to H$ is an operator such that $P_K(I-\lambda S):K\to K$, with $\lambda>0$, forms a proximal contraction, then the problem \eqref{vi0} has a unique solution $u^\ast\in K$. Moreover, for each $u_0\in K$, the recursive sequence $\{u_n\}\subset K$ defined by $u_{n+1}:=P_K(u_n-\lambda Su_n)$, for all $n\in \mathbb{N}$, converges to $u^\ast$. 
\end{theorem}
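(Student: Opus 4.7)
The plan is to reduce the variational inequality to a fixed-point problem for $T := P_K(I - \lambda S)$ and then invoke Theorem \ref{main-thm} in the degenerate setting where the ambient pair of subsets coincide. Concretely, I would set $X := H$, $A := B := K$, so that $d(A,B) = 0$ and $A_0 = B_0 = K$. In this regime a best proximity point $\overline{x}$ satisfies $d(\overline{x}, T\overline{x}) = 0$, i.e.\ it is an ordinary fixed point of $T$, and a $T$-proximal sequence is nothing but a Picard iteration of $T$ (as already noted in Remark 3 of Section 3).

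With this identification I would check the five hypotheses of Theorem \ref{main-thm}. Hypotheses (i) and (ii) reduce to the trivial statements $K \neq \emptyset$ and $T(K) \subseteq K$, the latter being built into the assumption that $P_K(I-\lambda S)$ is viewed as a self-map of $K$. Hypothesis (iii) is exactly what has been assumed on $P_K(I - \lambda S)$. For (iv), $K$ is closed in the complete metric space $H$, hence $K$ is itself complete, so in particular proximally complete. For (v), since $A_0 = K$ is (trivially) closed inside $K$, it is proximally closed, so the second alternative in (v) is satisfied.

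Theorem \ref{main-thm} then produces a unique $u^\ast \in K$ with $u^\ast = T u^\ast = P_K(u^\ast - \lambda S u^\ast)$, and guarantees that for every $u_0 \in K$ the $T$-proximal sequence based at $u_0$, which is exactly the iteration $u_{n+1} := P_K(u_n - \lambda S u_n)$, converges to $u^\ast$. To translate this back to the variational inequality, I would invoke Lemma \ref{lem}: $u^\ast$ is a fixed point of $P_K(I - \lambda S)$ if and only if $u^\ast$ solves \eqref{vi0}. This yields both existence and uniqueness of the solution together with the stated convergence of the iterates.

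There is essentially no hard step; the only point requiring care is the bookkeeping that turns the VI into a fixed-point problem suitable for Theorem \ref{main-thm}, namely the observation that setting $A = B = K$ collapses $d(A,B)$ to zero, collapses $A_0$ and $B_0$ to $K$, and collapses the best proximity point / $T$-proximal sequence notions to fixed point / Picard iteration. Once this identification is made, the verification of (i)–(v) is immediate from closedness and convexity of $K$ together with the standard properties of $P_K$ on a Hilbert space.
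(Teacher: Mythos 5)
Your proposal is correct and follows essentially the same route as the paper: reduce to the fixed-point problem for $T=P_K(I-\lambda S)$ with $A=B=K$, verify the hypotheses of Theorem \ref{main-thm}, and translate back via Lemma \ref{lem}. You actually spell out the verification of hypotheses (i)--(v) more explicitly than the paper does, which simply asserts that they hold.
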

\begin{proof}  Define the operator $T:K\rightarrow K$ by $T(x)=P_K(x-\lambda Sx)$, for all $x\in K$. $T$ satisfies all the hypotheses mentioned in  Theorem \ref{main-thm} for $A=B=K$ in the context of real Hilbert space $H$ equipped with norm metric $d$. Consequently, there exists a unique $u^\ast$ such that $d(u^\ast,Tu^\ast)=d(K,K)=0$ and $u_n\stackrel{d}{\longrightarrow}  u^\ast$. This yields that $u^\ast=T(u^\ast)=P_K(u^\ast-\lambda S u^\ast)$ and hence from Lemma \ref{lem}, it follows that $u^\ast$ is a solution of the problem \eqref{vi0}. This completes the proof.
\end{proof}



\end{document}